\theoremstyle{plain}
\newtheorem{thm}{Theorem}[section]
\newtheorem{rem}[thm]{Remark}
\newtheorem{exam}[thm]{Example}
\def\cal{\mathcal}
\def\bbb{\mathbb}
\def\op{\operatorname}
\renewcommand{\phi}{\varphi}
\newcommand{\Z}{\bbb{Z}}
\newcommand{\Q}{\bbb{Q}}
\newcommand{\C}{\bbb{C}}
\begin{document}

\title[Constructions of diagonal quartic and sextic surfaces]{Constructions of diagonal quartic and sextic surfaces with infinitely many rational points}
\author{Andrew Bremner, Ajai Choudhry, and  Maciej Ulas}

\keywords{diagonal quartic surface, elliptic surface, rational points} \subjclass[2010]{Primary: 11G35, Secondary: 11D25, 11D41, 14G05}
\thanks{Research of the third named author was supported by Polish Government funds for science, grant IP 2011 057671 for the years 2012--2013.}

\begin{abstract}
In this note we construct several infinite families of diagonal quartic surfaces
\begin{equation*}
ax^4+by^4+cz^4+dw^4=0,
\end{equation*}
where $a,b,c,d\in\Z\setminus\{0\}$ with infinitely many rational points and satisfying the
condition $abcd\neq \square$. In particular, we present an infinite family of diagonal
quartic surfaces defined over $\Q$ with Picard number equal to one and possessing infinitely
many rational points. Further, we present some sextic surfaces of type
$ax^6+by^6+cz^6+dw^i=0$, $i=2$, $3$, or $6$, with infinitely many rational points.
\end{abstract}

\maketitle

\section{Introduction}\label{Section1}
R. van Luijk~\cite{Luijk1} recently solved an old problem attributed to D. Mumford by
constructing a quartic K3 surface with geometric Picard number equal to one. Moreover,
he was able to solve a question posed by P. Swinnerton-Dyer and B. Poonen at an AIM workshop
in December 2002, who asked whether
there exists a K3 surface over a number field with Picard number one that contains infinitely many
rational points. Van Luijk was able to show that the set of quartic surfaces with Picard number
one and infinitely many rational points is dense in the set of all K3 surfaces in the
Zariski topology.
Here, we consider a related problem. More precisely, we are interested
in finding the simplest possible form of an equation defining a quartic surface $\cal{S}$ over
$\Q$ with Picard number one, and possessing infinitely many rational points. Here, we are talking about
the Picard number $\rho(S)=\mbox{rank of the Picard group of }\cal{S}$, rather than the geometric
Picard number, which is defined as the Picard number of $\overline{S}=S\times_{\Q}\overline{\Q}$.
We believe that this is provided by a surface with diagonal equation of the form
\begin{equation}
\label{generalquar}
\cal{S}:\;ax^4+by^4+cz^4+dw^4=0,
\end{equation}
with $a,b,c,d\in\Z\setminus\{0\}$.
It is not clear a priori whether such an example exists, and we were unable to find
a suitable reference in the existing literature.  The big advantage of considering surfaces
in diagonal quartic form follows from the fact that the Picard number of $\cal{S}$ may be easily
computed.
Pinch and Swinnerton-Dyer~\cite{PiSwi}, as well as Swinnerton-Dyer~\cite{SwiD2},
investigate in some generality the arithmetic of the diagonal quartic surface,
and in particular, the former give tables allowing one to read off the Picard rank of any given
diagonal quartic surface over $\Q$. We thus need to find many (or better, infinitely many)
examples of diagonal surfaces with infinitely many rational points.
Having these examples in hand, we can check whether some have Picard number
equal to one.
The literature concerning the arithmetic of diagonal quartic surfaces is large and starts with
investigations of the surface $x^4+y^4=z^4+w^4$ considered by Euler (see Mordell~\cite{Mor} and
Dem'janenko~\cite{Dem}). Swinnerton-Dyer~\cite{SwiD1} studied the surface in detail, describing
all curves of arithmetic genus 0 lying on the surface.  A classical paper of Richmond~\cite{Rich}
shows how one can construct a new point on (\ref{generalquar}) from a given non-trivial point when
$abcd=\square$.  Bright has written extensively on the subject, starting with his
thesis \cite{Bri}.  Richmond's idea was used by Logan, van Luijk, and
MacKinnon~\cite{LoLuM} to prove density (in the Euclidean topology) of the rational points
on (\ref{generalquar}) when $abcd=\square$.
In a recent paper van Luijk~\cite{Luijk2} extended ideas of Swinnerton-Dyer \cite{SwiD3} and
proved that if the surface $V$ defined over a number field $k$ admits two elliptic fibrations
then there exists an explicitly computable closed subset $Z$ of $V$, not equal to $V$, such that
for each field extension $K$ of $k$ of degree at most $d$ over the field of rational numbers,
the set $V(K)$ is Zariski dense as soon as it contains any point outside $Z$.
In particular, his theorem implies that if the quartic surface $\cal{S}$ admits two elliptic
fibrations, the set $\cal{S}(\Q)$ is infinite, and a suitably chosen point lies outside the easily
computable set $Z$, then necessarily $\cal{S}(\Q)$ is dense in the Zariski topology. More
specifically, the elliptic fibrations do not necessarily need to
possess a section: it is enough to have one properly chosen specialization which is of positive rank.
We have (maybe somewhat arbitrarily) restricted attention to surfaces (\ref{generalquar})
where $abcd \neq \square$, since many existing results depend upon $abcd=\square$, and in which
situation the Picard number is at least 2.

We know relatively few examples of surfaces (\ref{generalquar}) with
infinitely many points, satisfying the condition $abcd\neq \square$. These include
the quartic $x^4+y^4+z^4=w^4$ (of Picard rank 4) considered by Elkies~\cite{Elk1}, and the
quartics $x^4+y^4+z^4=2t^4$, $x^4+y^4+4z^4=t^4$, $x^4+2y^4+2z^4=t^4$, $x^4+8y^4+8z^4=t^4$,
(of ranks 4, 8, 3, 3, respectively) in Carmichael~\cite{Carm}.
In fact, the method used by Carmichael also yields parametric solutions of the Diophantine equation
(\ref{generalquar}) whenever $a+b=0$ and $4a^2cd$ is a perfect fourth power. As this solution is not
given explicitly in \cite{Carm}, it is stated below:
\[
x=k(8as^4-ct^4), \quad y=k(8as^4+ct^4), \quad z=8kas^3t, \quad w=4acst^3
\]
where $s,\,t$ are arbitrary parameters and $k$ is determined by the relation $4a^2cd=k^4.$
Further, when
$(a,\,b,\,c,\,d)=(1,1,1,-2k^2)$, and the Diophantine equation $X_1^2+3X_2^2=k X_3^2$ is
solvable, a solution of (\ref{generalquar}) in terms of parameters $p,\,q$ is  given by
\[x = \phi_1(p,\,q)-\phi_2(p,\,q),\, y = 2\phi_2(p,\,q),\, z = \phi_1(p,\,q)+\phi_2(p,\,q),\, w = \phi_3(p,\,q),
\]
where $X_i=\phi_i(p,\,q),\,i=1,\,2,\,3,$ is a parametric solution of the equation $X_1^2+3X_2^2=k X_3^2$.

We also note that some believe a small Picard number of $\cal{S}$ has a strong influence on
the set of rational points on $\cal{S}$. This belief was questioned in a recent paper of
Elsenhans~\cite{Els}, who considered the set of surfaces $ax^4+by^4=cz^4+dw^4$  with
$1\leq a, b, c, d\leq 15$ from a computational point of view. He noted that in the considered
range there is no great difference between the number of unsolved cases for differing
Picard rank. In light of this
remark it is natural to look for examples of diagonal quartic surfaces with infinitely many
rational points and small Picard number. \\ \\
We briefly describe the content of the paper.
In Section \ref{sec2} the first construction of diagonal quartic surfaces with $abcd\neq \square$
is given. The construction is to intersect the quartic surface with a quadric surface chosen in
such a way that the intersection splits into quadric curves.
In Section \ref{sec3} the second construction is given. This method is equivalent to finding
curves of low degree on the affine variety $aX^4+bY^4+cZ^2+dW^2=0$, in particular curves
where $Z,W$ are of degree at most 2. Demanding then that $Z,W$ be squares leads to a condition
governed by a curve of genus 1 that may be chosen to have positive rank.
The third construction is presented in Section \ref{sec4}. It is based on
diagonal quartic curves (of genus 3) of the form $Px^4+Qy^4+Rz^4=0$ with a given rational point.
In Section \ref{sec5} we use the same idea as in Section \ref{sec3} by seeking curves of low
degree on affine varieties
of the form $Px^6+Qy^6+Rz^3+Sw^2=0$ and $Px^6+Qy^6+Rz^3+Sw^3=0$. These constructions allow us
to find infinitely many sextic surfaces of the form $Px^6+Qy^6+Rz^6+Sw^2=0$
and $Px^6+Qy^6+Rz^6+Sw^3=0$ with infinitely many points. This is motivated by earlier
research~\cite{BreUl2} concerning the existence of rational points on $a(x^2-y^6)=b(z^6-w^6)$.
Finally, in the last section we give some remarks concerning possible extensions of these
approaches.

\section{First construction} \label{sec2}

Consider a diagonal quartic surface $c_1 x^4+c_2 y^4+c_3 z^4+w^4 =0$,
and demand that its intersection with a quadric surface of the type
\[ w^2 = a_1 x^2+a_2 x y+a_3 x z+a_4 y^2+a_5 y z+a_6 z^2 \]
be singular, splitting into two quadrics (each with multiplicity two).
This is equivalent to the demand that
\begin{equation}
\label{intersectionquartic}
c_1 x^4+c_2 y^4+c_3 z^4+(a_1 x^2+a_2 x y+a_3 x z+a_4 y^2+a_5 y z+a_6 z^2)^2
\end{equation}
split into two quadratic factors.  We let
\begin{equation}
\label{quadfactor}
z^2 + (b_1 x +b_2 y) z + b_3 x^2 + b_4 x y + b_5 y^2
\end{equation}
be one of the quadratic factors.

Write out the system of nine equations resulting from equating to zero
the coefficients of the remainder on dividing (\ref{intersectionquartic})
by (\ref{quadfactor}). We successively eliminate variables developing
a branching tree of cases, according as to vanishing or non-vanishing of
denominators arising from the elimination. A full analysis is too tiresome:
Groebner bases of some ideals arising in this process contain many hundreds
of polynomials of high degree. In order to find non-trivial examples, we
solved in turn for $c_1,c_2,a_1,a_3,b_5,b_3,b_1$, leading to two equations
in $a_2,a_4,a_5,a_6,b_2,b_4,c_3$ to be satisfied. The denominators vanishing
provided four cases to consider. Otherwise, the numerators have respective degrees
$4$, $3$, in $a_2$, and the resultant with respect to $a_2$ contains eight factors
(excluding the factor $c_3$). We were able successfully to investigate seven of
these factors (the seventh is quadratic in $a_4$, allowing parameterization; the eighth
has degrees $4,8,12,8,6$ in $a_4,a_5,a_6,b_2,c_3$).   \\

\noindent
When $a_3=a_5=b_1=b_2=0$, essentially just two examples arise. \\
First, we obtain the surface
\[ V_1: \; \frac{a^2(s^2-2r t)}{s^2}x^4+\frac{a^2 (s^2-2 r t)t^2}{r^2 s^2}y^4+\frac{2a^2 t}{r s^2}z^4-w^4=0, \]
which when intersected with the quadric
\[ w^2 = a x^2+\frac{2 a t}{s}x y +\frac{a t}{r} y^2 \]
results in the identity
\begin{align}
\label{quadint1}
& 2a^2rt(rx^2 + sxy + ty^2 + z^2)(rx^2 + sxy + ty^2 - z^2) -  \nonumber \\
&  (arsx^2+2artxy+atsy^2+rsw^2)(arsx^2+2artxy+atsy^2-rsw^2) =  \nonumber \\
& a^2(-s^2+2r t)r^2 x^4+a^2 (-s^2+2 r t)t^2y^4-2a^2rtz^4+r^2s^2w^4.
\end{align}
The surface $V_1$ has generic Picard rank equal to 2.
We find surfaces $V_1$ with infinitely many points by demanding for example that the intersection
of the two quadrics
\[r x^2 + s x y + t y^2 + z^2 = 0, \quad a r s x^2+2a r t x y+a s t y^2+r s w^2 = 0, \]
represents an elliptic curve of positive rational rank.
If we take $a=r=-1$, then the intersection becomes
\[ -x^2 + s x y + t y^2 + z^2=0, \quad s x^2 + 2 t x y - s t y^2 - s w^2 = 0, \]
equivalently, on setting $2t=u s^2$, $(X,Y,Z,W)=(x,s y/2,z,w)$,
\begin{equation}\label{model1}
 X^2 - 2 X Y - 2 u Y^2 = Z^2, \quad X^2 + 2 u X Y - 2 u Y^2 = W^2.
 \end{equation}
This is an elliptic curve on taking $(X,Y,Z,W)=(1,0,1,1)$ as origin. The point
$Q(X,Y,Z,W)=(1,0,1,-1)$ is of infinite order, and the multiples $mQ$, $m=2,3,...$ give
infinitely many points on the surface
\begin{equation}
\label{ex1}
S: \; (1+u) X^4 + 4u^2(1+u) Y^4 - u Z^4 - W^4 = 0
\end{equation}
of generic Picard rank 2.
For example, when $m=2$ we obtain
\[ (X,Y,Z,W)=(1+10u+u^2, \; -4(1-u), \; -3-10u+u^2, \; 1-10u-3u^2), \]
and when $m=3$
\begin{align*}
(X,Y,Z,W)= & (-(1+10u+u^2)(1+52u+38u^2+52u^3+u^4),  \\
           &  4(1-u)(-3-10u+u^2)(-1+10u+3u^2), \\
           &  -5-114u+233u^2+1140u^3+381u^4+94u^5-u^6, \\
           &  1-94u-381u^2-1140u^3-233u^4+114u^5+5u^6);
\end{align*}
etc. etc. \\
In light of the identity (\ref{quadint1}), the surface $S$ at (\ref{ex1}) contains the
pair of fibrations
\[ Q_1 = \lambda Q_3, \quad \lambda Q_2 = Q_4, \qquad \mbox{ and } \qquad Q_1 = \lambda Q_4, \qquad \lambda Q_2 = Q_3, \]
where
\begin{align*}
Q_1=u (-X^2+2X Y+2u Y^2+Z^2), \quad & Q_2=s(-X^2+2X Y+2u Y^2-Z^2), \\
Q_3=-X^2-2u X Y+2u Y^2+W^2, \quad & Q_4=s(X^2+2u X Y-2 u Y^2+W^2).
\end{align*}
The existence of infinitely many rational points, together with the
result of van Luijk (and computations to ensure the applicability of van Luijk's result), now
implies the following:

\begin{thm}\label{pic}
The set of diagonal quartic surfaces defined over $\Q$ with Picard number 2 and Zariski
dense set of rational points is infinite.
\end{thm}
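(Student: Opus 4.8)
The plan is to extract the required infinite set from the one-parameter family $S=S_u$ of (\ref{ex1}), with $u$ ranging over a suitably chosen infinite set of rationals. For infinitely many admissible $u$ I need three things: that $S_u(\Q)$ is infinite and Zariski dense, that $\rho(S_u)=2$, and that the surfaces so obtained fall into infinitely many isomorphism classes.

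The rational-points part is essentially already done. For $u\in\Q\setminus\{0,-1\}$ the surface $S_u$ is a smooth (hence K3) diagonal quartic, and $abcd=4u^3(1+u)^2$ is a non-square, for instance for every negative $u$, so all such $S_u$ belong to the class considered throughout the paper. The construction carried out above shows that $S_u$ carries the two conic pencils $Q_1=\la Q_3$, $\la Q_2=Q_4$ and $Q_1=\la Q_4$, $\la Q_2=Q_3$ coming from the identity (\ref{quadint1}), and that the multiples $mQ$ of the point $Q=(1,0,1,-1)$ on the elliptic curve (\ref{model1}) map to rational points of $S_u$. Since $Q$ has infinite order over $\Q(u)$, Silverman's specialization theorem ensures that it specializes to a point of infinite order for all but finitely many $u\in\Q$, whence $S_u(\Q)$ is then infinite. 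For Zariski density I would apply van Luijk's theorem \cite{Luijk2} to one of these two elliptic fibrations: it produces an explicitly computable proper closed subset $Z\subset S_u$ such that density of $S_u(\Q)$ follows as soon as one rational point lies outside $Z$. Computing $Z$ for the generic member and verifying that, say, the image of $2Q$ (written out above) avoids $Z$ gives density for the generic $u$, hence by specialization for all but finitely many $u$.

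It remains to control $\rho(S_u)$ and to count isomorphism classes. The two pencils give two independent divisor classes, so $\rho(S_u)\ge 2$, and the family has generic Picard rank $2$; since specialization can only increase $\rho$, I must prevent a jump for infinitely many $u$. The relevant input is that over $\overline\Q$ every diagonal quartic is isomorphic to the Fermat quartic (scale each coordinate by a fourth root of its coefficient), so the geometric Picard number is always $20$, while $\rho$ over $\Q$ is cut out by a finite list of conditions on the coefficients modulo fourth powers together with certain quadratic and quartic character values --- this is precisely what the tables of Pinch and Swinnerton-Dyer \cite{PiSwi} make explicit. One then checks that for $u$ restricted to a suitable congruence class (equivalently, lying off the thin set on which some rank-increasing condition is satisfied) no such condition can force $\rho(S_u)>2$, which leaves infinitely many admissible $u$. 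Finally, two diagonal quartics are projectively isomorphic only if their coefficient quadruples agree up to a permutation and up to multiplying entries by fourth powers; since the reduced integral models of the $S_u$ have unbounded height as $|u|\to\infty$, only finitely many admissible $u$ can give a surface isomorphic to a fixed one, so infinitely many isomorphism classes arise. This proves the theorem.

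The main obstacle is the control of $\rho$: because $\Q$ is countable one cannot simply claim that ``all but countably many $u$'' are good, and one genuinely needs the effectivity of the Pinch--Swinnerton-Dyer criteria to convert the generic equality $\rho=2$ into an honest infinite family of parameters. A secondary technical point is ensuring that the van Luijk hypothesis --- a rational point outside $Z$ --- holds uniformly enough in $u$; this is the content of the computations alluded to just before the statement.
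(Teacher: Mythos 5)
Your proposal is correct and follows essentially the same route as the paper: work with the family $S_u$ of (\ref{ex1}), use the genus-one curve cut out by the two quadrics (\ref{model1}) together with the point $(1,0,1,-1)$ of infinite order, and invoke van Luijk's theorem for Zariski density. The one genuine methodological difference is in how the infinite order of the specialized point is established: you appeal to Silverman's specialization theorem, which yields ``all but finitely many $u$'' non-effectively, whereas the paper passes to the Weierstrass model $\cal{E}$ over $\Q(u)$, certifies infinite order of the generic point by specializing at $u=2$ and applying Lutz--Nagell, and then uses Mazur's torsion bound to compute the exceptional set exactly (only $u_0=1$ beyond the singular fibres). The paper's version buys an explicit list of admissible parameters; yours is shorter but less precise. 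You are also more scrupulous than the paper about two points it leaves implicit, namely that $\rho(S_u)=2$ (not merely $\ge 2$) for infinitely many specializations, via the Pinch--Swinnerton-Dyer criteria, and that the resulting surfaces are pairwise non-isomorphic; both of these additions are in the right spirit, though your sketch of the Picard-rank control would need the actual congruence/non-square conditions written out to be complete.
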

\begin{proof} Due to the large number of parameters it is a difficult task to perform all the necessary
computations to apply Van Luijk's theorem to the surface $V_{1}$. We thus present the proof in the
simplest possible case. More precisely, consider the surface $S$ at (\ref{ex1}), which is a special case
of the surface $V_{1}$. The surface $S$ contains a genus one curve, say $C$, defined by the intersection
of the two quadrics at (\ref{model1}). The curve $C$ is smooth provided that $u\not\in\{-2,-1,-1/2,0\}$.
This model contains the rational point $(1,0,1,1)$, which can be used as a point at infinity.
Then there exists a birational map $\phi: \; C \rightarrow \cal{E}$, where $\cal{E}$ is the cubic curve over
$\Q(u)$ with Weierstrass equation
\begin{equation*}
\cal{E}:\;Y^2=X^3+27 u(6+11u+6u^2)X+54u^2(18 + 37 u + 18 u^2).
\end{equation*}
Note that the point $(1,0,1,-1)$ does not lie on any of the 48 lines lying on $S$. We need to check
that $(1,0,1,-1)$ corresponds to a non-torsion point on $\cal{E}$. Indeed, the curve $\cal{E}$ contains the
point $U=\phi((1,0,1,-1))=(3u(3u+4),27u(u+1)(u+2))$. By specialization at $u=2$, the point $U_{2}$, lying on
the curve $\cal{E}_{2}$, is of infinite order on $\cal{E}_{2}$ (for $2U_{2}$ is not integral, and the
Lutz-Nagell theorem applies).
This immediately implies that $U$ is of infinite order on $\cal{E}$. However, we are interested in
characterizing those $u=u_{0}\in\Q$ such that the specialized point $U_{u_{0}}$ is of infinite order on
$\cal{E}_{u_{0}}$. By Mazur's theorem we know that the order of a torsion point on an elliptic curve defined over
$\Q$ is at most $12$. A quick computation of the multiplicities $mU$ for $m=1,2,\ldots, 12$, reveals that
$mU_{u_{0}}=\cal{O}$ only for $m=4$ with $u_{0}=1$. Thus for $u_{0}\in\Q\setminus\{-2,-1,-1/2,0,1\}$ the point
$U_{u_{0}}$ is of infinite order on $\cal{E}_{u_{0}}$. Applying now Van Luijk's theorem, we get that for each
$u\in\Q\setminus\{-2,-1,-1/2,0,1\}$ the set of rational points on $S$ is Zariski dense.
\end{proof}

\begin{rem}
{\rm The reason why the above proof is simple is clear. We have an explicit point (in our case $(1,0,1,-1)$)
which lies on the surface, and thus on the corresponding elliptic curve (in our case $\cal{E}$). Thus, all
computations can be made explicit. In the general case, on the surface $V_{1}$ we do not have a generic
rational point and thus reasoning depends on the possible form of the required point(s) with (unknown)
coefficients $P=(X_{0},Y_{0},Z_{0},W_{0})$. In particular, it is very difficult to characterize the
torsion subgroup of those smooth fibers containing the point $P$.}
\end{rem}

%
%

\noindent
Second, we obtain the surface
\[ V_2: \; a^2 x^4+d^2 y^4 +2 a d s^2 z^4 - w^4 = 0, \]
which when intersected with the quadric
\[ w^2 = a x^2 + d y^2 \]
results in the identity
\begin{align*}
& (a x^2+d y^2 +w^2)(a x^2+d y^2-w^2) -2 a d (x y - s z^2) (x y + s z^2) = \\
& a^2 x^4+d^2 y^4 +2 a d s^2 z^4 - w^4.
\end{align*}
The surface has generic Picard rank 2.
We find surfaces $V_2$ with infinitely
many points by demanding for example that the intersection of the two quadrics
\[ a x^2+d y^2 = w^2 , \quad x y = s z^2 \]
represents an elliptic curve of positive rational rank.
Set $(x,y,z,w)=(s X,Y,Z,s u W)$, and take $(a,d)=(\alpha u^2, (1-\alpha)s^2 u^2)$,
with $(X,Y,Z,W)=(1,1,1,1)$ as elliptic curve origin.
The point $Q(X,Y,Z,W)=(1,1,-1,1)$ is of infinite order and the multiples $mQ$, $m=2,3,...$
give infinitely many points on the surface of generic rank 2
\begin{equation}
\label{ex2}
\alpha^2 X^4 + (1-\alpha)^2 Y^4 +2 \alpha(1-\alpha)Z^4 - W^4 = 0.
\end{equation}
For example with $m=2$:
\begin{align*}
(X,Y,Z,W) = & ((-3+4\alpha^2)^2, \; (1-8\alpha+4\alpha^2)^2, \; (-3+4\alpha^2)(1-8\alpha+4\alpha^2), \\
 & 1+24\alpha-72\alpha^2+96\alpha^3-48\alpha^4);
\end{align*}
etc. \\

\noindent
The only resulting intersection of the form we seek arising from non-vanishing denominators
is the following. The surface
\[ V_3: \; x^4+y^4 - (2u^2-v^2)^2 z^4 + (2u^2-v^2) k^2 w^4 = 0 \]
intersected with the quadric surface
\[ (2u-v) k w^2 = (x+y)^2 + 2(u-v) (x+y) z - (2u^2-v^2) z^2 \]
results in the identity
\begin{align*}
& 2\left( (3u-2v)(x^2+y^2) + (4u-3v)x y - (2u^2-v^2)(x+y+(u-v)z)z \right) \times \\
& \left( u(x^2+y^2) + v x y + (2u^2-v^2)(x+y+(u-v)z)z \right)- \\
&  (2u^2-v^2) \left( (x+y)^2 + 2(u-v)(x+y)z - (2u^2-v^2)z^2 - (2u-v)k w^2 \right) \times \\
& \left( (x+y)^2 + 2(u-v)(x+y)z - (2u^2-v^2)z^2 + (2u-v)k w^2 \right) \\
& = (2u-v)^2 \left( x^4+y^4 - (2u^2-v^2)^2 z^4 + (2u^2-v^2) k^2 w^4 \right).
\end{align*}
The surface $V_3$ has generic Picard rank equal to 3.
To find surfaces $V_3$ with infinitely many rational points,
it suffices for example to find infinitely many rational points on the intersection
\begin{align*}
F_0: & \; (x+y)^2 + 2(u-v) (x+y) z - (2u^2-v^2) z^2 = (2u-v) k w^2, \\
F_1: & \; u (x^2+y^2) + v x y + (2u^2-v^2)(x+y+(u-v)z)z = 0.
\end{align*}
We set ($x,y,z)=(p,q,r)$ and regard the equation of $F_1$ as defining a cubic curve
$C_1$ in the $u,v$-plane over ${\Q}(p,q,r)$, containing the rational point $(u,v)=(0,0)$.
As such, it is an elliptic curve with cubic model:
\[ V^2 = U(U^2 -(p^4+q^4)). \]
We seek points $(u,v)$ on $C_1$ such that $2u^2-v^2 \neq \pm \square$, in order to
keep small the Picard rank of the surface. Now the points $P_1=(-p^2,-p q^2)$,
$P_2=(-q^2,-p^2 q)$ on the cubic model are independent, and $P_1+P_2$ pulls back
to the point on $C_1$
\[ P(u,v) = (-p q(p+q)/((p^2+p q+q^2)r), (p+q)(p^2+q^2)/((p^2+p q+q^2)r)), \]
with
\[ 2u^2-v^2=-(p+q)^2(p^4+q^4)/((p^2+p q+q^2)^2r^2). \]
For this point $P$ to determine a point on the quadric $F_0$, we require
\[ k=p q r(2 p^2+3 p q+2 q^2)/((p+q)(p^2+p q+q^2)).  \]
On setting $z=(p^2+p q+q^2)r Z /(p+q)$, $w=(p^2+p q+q^2)W$, the surface now takes the form
\begin{equation}
\label{surface1}
V: x^4+y^4 - (p^4+q^4)^2 Z^4 - p^2 q^2 (2 p^2+3 p q+2 q^2)^2(p^4+q^4)W^4 = 0;
\end{equation}
the generic Picard rank is 3.  The intersecting quadric becomes
\[ F_0: \; p q(p+q)^2 (2p^2+3 p q+2 q^2) W^2 = -(x+y)^2 +2(p^2+p q+q^2)(x+y)Z-(p^4+q^4)Z^2; \]
and the intersection comprises the two quadrics
\begin{align*}
F_1: \; & -p q(x^2+y^2) + (p^2+q^2)x y - (p^4+q^4)(x+y) Z + (p^2+p q+q^2)(p^4+q^4) Z^2 = 0, \\
F_2: \; & (2p^2+3p q+2q^2)(x^2+y^2) + (3p^2+4p q+3q^2)x y - (p^4+q^4)(x+y)Z \\
     & +(p^2+p q+q^2)(p^4+q^4)Z^2 = 0.
\end{align*}
We have arranged that $F_1$ contain the point
$(x,y,Z)=(p,q,(p+q)/(p^2+p q+q^2))$, which allows parameterization of all points
on $F_1$ by:
\begin{align*}
x:y:z = & p q^2(p^2+q^2) a^2 + (p+q)(p^4+q^4) a b +p(p^4+q^4) b^2: \\
& p^2 q(p^2+q^2) a^2 + (p+q)(p^4+q^4) a b +q(p^4+q^4) b^2: \\
& (p-q)^2(p+q) a b.
\end{align*}
The inverse is given by
\[ a/b = -((p-q)q x - (p-q)p y + (p^4+q^4) z)/(p q (p^2+q^2) z). \]
Such a point gives rise to a rational point on the intersecting quadric $F_0$
precisely when
\begin{align*}
& p q(2p^2+3p q+2q^2) W^2 = -p^2 q^2(p^2+q^2)^2a^4-2p q(p+q)^2(p^2+q^2)(p^2-p q+q^2)a^3b\\
& -(p^4+q^4)(p^4+2p^3q+6p^2q^2+2p q^3+q^4)a^2b^2-2(p+q)^2(p^2-p q+q^2)(p^4+q^4)a b^3 \\
& -(p^4+q^4)^2b^4
\end{align*}
and we have arranged that there is a point on this elliptic quartic at
\[ (a,b,W)=(p^4+q^4, -p q(p^2+q^2), p q(p-q)^2(p^2+q^2)(p^4+q^4)). \]
This latter point is of infinite order, and hence its multiples
pull back to infinitely many rational points on (\ref{surface1}).
Using the same type of reasoning as in the proof of Theorem \ref{pic}, we get the following.

\begin{thm}\label{pic3}
 The set of diagonal quartic surfaces defined over $\Q$ with Picard number 3 and Zariski
dense set of rational points is infinite.
\end{thm}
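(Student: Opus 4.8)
The plan is to repeat, for the family of surfaces $V=V_{p,q}$ given by (\ref{surface1}), the argument already carried out for the surface $S$ of (\ref{ex1}) in the proof of Theorem \ref{pic}, now with the pair $(p,q)$ playing the role of the single parameter $u$. First I would record the ingredients needed to invoke van Luijk's theorem: by construction $V$ meets the quadric $F_0$ in the union of the two quadric curves $F_1,F_2$, so $V$ carries the two elliptic fibrations exhibited by the quartic identity displayed just before (\ref{surface1}), and this holds without any genericity hypothesis on $p,q$. Next I would follow the construction of the excerpt: parameterize the conic $F_1$ rationally in $(a:b)$, substitute into $F_0$ to express $W^2$ as a quartic in $a,b$, and demand that this quartic be a square, which yields the elliptic quartic
\[ pq(2p^2+3pq+2q^2)W^2=-p^2q^2(p^2+q^2)^2a^4-\dots-(p^4+q^4)^2b^4 \]
over $\Q(p,q)$; taking the exhibited rational point as origin, I would then pass to a Weierstrass model $\cal{E}$ over $\Q(p,q)$ and transport the point $(a,b,W)=(p^4+q^4,-pq(p^2+q^2),pq(p-q)^2(p^2+q^2)(p^4+q^4))$ to a point $U\in\cal{E}(\Q(p,q))$.

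The core of the argument is to show that $U$ specializes to a non-torsion point of $\cal{E}_{(p,q)}$ for all $(p,q)$ outside an explicitly determined finite set. Exactly as in the proof of Theorem \ref{pic}, this rests on two facts. A single convenient numerical specialization $(p,q)=(p_0,q_0)$ suffices: if $U_{(p_0,q_0)}$ is checked to be of infinite order --- for instance because some small multiple fails to be integral, so that the Lutz--Nagell theorem applies --- then by Silverman's specialization theorem $U$ is non-torsion on $\cal{E}/\Q(p,q)$. And, by Mazur's theorem, a rational torsion point has order at most $12$, so a rational $(p,q)$ can be exceptional only if $mU_{(p,q)}=\cal{O}$ for some $m\in\{1,\dots,12\}$; each of these conditions amounts to a system of polynomial equations in $p,q$ cutting out a proper subvariety of the $(p,q)$-plane, hence only finitely many rational $(p,q)$ are lost. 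For every surviving $(p,q)$, the multiples $mU$ pull back to infinitely many rational points of $V_{p,q}$.

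Finally I would dispose of Zariski density and of the Picard number. To each of the (infinitely many) surfaces $V_{p,q}$ with an infinite set of rational points I apply van Luijk's theorem: the two fibrations are in hand, and, since the base point of the whole construction is explicit, the verification that the point coming from $U$ lies outside van Luijk's computable exceptional closed set $Z$ is as concrete as the analogous check in the proof of Theorem \ref{pic}; this gives that $V_{p,q}(\Q)$ is Zariski dense. As for the Picard number, the value of $2u^2-v^2$ at the chosen point equals $-(p^4+q^4)$ up to a rational square, and since $p^4+q^4=\square$ has no nontrivial rational solution this quantity is never $\pm\square$; hence, for all but finitely many of the admissible $(p,q)$, the tables of Pinch and Swinnerton-Dyer~\cite{PiSwi} give Picard rank exactly $3$, while the coefficient data $(1:1:-(p^4+q^4)^2:-p^2q^2(2p^2+3pq+2q^2)^2(p^4+q^4))$, read modulo fourth powers and permutation of the coordinates, takes infinitely many values as $(p,q)$ varies, so infinitely many pairwise non-isomorphic such surfaces occur. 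I expect the only genuine difficulty to be bookkeeping --- checking that the excluded $(p,q)$ really form a finite set and that at least one (hence infinitely many) admissible $(p,q)$ survives every constraint at once --- rather than anything conceptual, the whole structure being identical to that of the proof of Theorem \ref{pic}.
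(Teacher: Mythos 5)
Your proposal follows essentially the same route as the paper, which proves Theorem \ref{pic3} simply by citing the construction of the surface (\ref{surface1}) — the splitting of the intersection with $F_0$ into the quadrics $F_1,F_2$, the elliptic quartic in $(a,b,W)$ with its explicit point of infinite order — and then invoking ``the same type of reasoning as in the proof of Theorem \ref{pic}'' (specialization, Mazur's bound on torsion, van Luijk's theorem); your writeup is a faithful expansion of exactly that. The only quibble is your assertion that the torsion conditions $mU_{(p,q)}=\cal{O}$ exclude ``only finitely many rational $(p,q)$'': a proper closed subvariety of the $(p,q)$-plane can contain infinitely many rational points, but since the coefficients of (\ref{surface1}) are homogeneous of degrees divisible by $4$ in $(p,q)$ the family depends only on $p/q$ and the exclusion is indeed finite, so the conclusion is unaffected.
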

\noindent

\begin{rem} {\rm The essence of the method described in this section was used in \cite{BreUl1}
to find infinitely many integral points on the surface $w^2=x^6+y^6+z^6$.}
\end{rem}
\begin{rem}{\rm The approach of this section cannot find diagonal quartic surfaces with Picard number
one, for the construction immediately implies the existence of two elliptic fibrations on
$\cal{S}$ (defined over $\Q$) and thus the Picard number is at least two (page 32 of Bright \cite{Bri}, Corollary 2.27).}
\end{rem}

\begin{rem}
{\rm A nice illustration of Van Luijk's theorem in practice can be also found in the Master's thesis of
D. Festi \cite{Festi}.}
\end{rem}

%
%
%

\section{Second construction} \label{sec3}

The second method we describe to construct diagonal quartics with infinitely many rational points
is even more direct than the previous one. Consider the genus three curve $C:\;Px^4+Qy^4+Rz^4=0$
and suppose that $(a,b,c)$, with $abc\neq 0$ is a rational point lying on $C$.  Substituting
\begin{equation*}
x=a T, \quad y=-\frac{c^3R}{b^3Q}+b T, \quad z=1+c T,
\end{equation*}
there results
\begin{equation*}
G(T)=\frac{-a^4 P R}{b^{12}Q^3}(b^8Q^2-b^4c^4Q R+c^8R^2 + 4b^4c Q(b^4Q-c^4R)T + 6b^8c^2Q^2T^2)=0.
\end{equation*}
The polynomial $G$ is of degree two in $T$ and thus if a rational number $S$ is chosen so that the
quartic curve
\begin{equation*}
\cal{C}:\;-Sw^4=G(T)
\end{equation*}
has infinitely many rational points, then there will be infinitely many rational points on the surface
$Px^4+Qy^4+Rz^4+Sw^4=0$.

To construct some examples, let $a, b, c$ be integer parameters and
let $R=-(Pa^4+Qb^4)/c^4$ be a non-zero rational number. Then the point $(a,b,c)$ lies on $C$ and
we seek $S$ such that there is a point of infinite order on the curve
\begin {align*}
\cal{C}: \; & -SW^4 = H(T) \\
          = & P Q(a^4P+b^4Q)(a^8P^2+3a^4b^4P Q+3b^8Q^2 + 4b^4c Q(a^4P+2b^4Q)T + 6b^8c^2Q^2T^2),
\end{align*}
where $W=\frac{b^3c Q}{a}w$. In order to guarantee the existence of a point on $\cal{C}$
we put $S=-H(t)$, so that $\cal{C}$ contains the point $(W,T)=(1,t)$.

\begin{exam}{\rm
To produce an explicit example, put $R=-P-Q$, $a=b=c=1$. Thus we consider the curve
$-S w^4= P Q(P+Q)(P^2+3P Q+3Q^2 + 4 Q(P+2Q)T + 6Q^2T^2)$ with $S=-P Q(P+Q)(P^2+3P Q+3Q^2+4Q(P+2Q)t+6Q^2t^2)$.
That is, the curve
\begin{align*}
\cal{C}: \;  & P Q(P+Q)(P^2+3P Q+3Q^2+4Q(P+2Q)t+6Q^2t^2) W^4 = \\
             & P Q(P+Q)(P^2+3P Q+3Q^2 + 4 Q(P+2Q)T + 6Q^2T^2).
\end{align*}
Using the point $(W,T)=(1,t)$ as a point at infinity, we find that $\cal{C}$ is birationally
equivalent to the elliptic curve defined over $\Q(P,Q)(t)$ with cubic model:
\[ \cal{E}: \; Y^2 = X(X^2 + 3(P^2+P Q+Q^2)(P^2+3P Q+3Q^2+4P Q t+8Q^2 t+6Q^2t^2)).  \]
From a geometric point of view,  $\cal{E}$ for fixed rational $P,Q$ represents a rational elliptic surface
in the plane $(X,Y,t)$.  Thus by results of Shioda, the generators of $\cal{E}(\C(t))$
lie among polynomial solutions $(X,Y)$ of the equation of $\cal{E}$ which satisfy the
conditions $\op{deg}_{t}X\leq 2,\;\op{deg}_{t}Y\leq 3$.  It is straightforward in any given instance
to compute all such polynomials defined over $\Q$.

For generic $P,Q$, the point $(W,T)=(-1,t)$ is of finite order on $\cal{C}$ precisely when $P+2Q+3 Q t=0$.
When $t \neq -(P+2Q)/(3Q)$ therefore, this point is of infinite order, and its multiples pull
back to infinitely many rational points on the diagonal quartic surface
\[ P x^4 + Q y^4 -(P+Q) z^4 -P Q(P+Q)(P^2+3P Q+3Q^2+4Q(P+2Q)t+6Q^2t^2) w^4 = 0. \]
The generic Picard rank is in this instance equal to 1. \\
To refine the example further, consider the specific case $P=Q=1$, and set $t=-1+u$.
Computation shows that for $u \neq 0$,
the point $M(-1,-1+u)$ is a generator for $\cal{E}(\Q(u))$. Its multiples $kM$, $k=1,2,3...$, pull back to
infinitely many rational points $(x,y,z,w)$ on the diagonal quartic surface
\begin{equation}
\label{St}
\cal{S}_{u}: \; X^4+Y^4-2Z^4-2(1+6u^2)W^4=0.
\end{equation}
The point $M$ pulls back to $(X,Y,Z,W)=(-1+u,1+u,u,-1)$; and $2M$ pulls back to
\begin{align*}
(X,Y,Z,W) =  & (-1 - 3u - 36u^3 + 54u^4 - 162u^5 - 324u^7 - 729u^8 + 81u^9, \\
& 1 - 3u - 36u^3 - 54u^4 - 162u^5 - 324u^7 + 729u^8 + 81u^9, \\
& 3u(-1 - 12u^2 - 54u^4 - 108u^6 + 27u^8), \\
& -(1 + 12u^2 + 9u^4)(-1 + 27u^4)),
\end{align*}
etc.
}
\end{exam}
\noindent
To our knowledge this is the first explicit example of a family of diagonal quartics each with
Picard number equal to 1 and possessing infinitely many rational points. We thus get:
\begin{thm}\label{pic4}
The set of diagonal quartic surfaces defined over $\Q$ with Picard number 1 and infinitely many
rational points is infinite.
\end{thm}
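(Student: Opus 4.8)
The plan is to exhibit a one–parameter family of diagonal quartic surfaces, each of generic Picard rank $1$, carrying infinitely many rational points, and then to argue (in the spirit of the proof of Theorem~\ref{pic}) that the Picard number remains $1$ and the rational points persist for all but finitely many specializations of the parameter. The family is already before us: the surface $\cal{S}_u$ at (\ref{St}), namely $X^4+Y^4-2Z^4-2(1+6u^2)W^4=0$. First I would record that the coefficient product here is $1\cdot 1\cdot(-2)\cdot(-2(1+6u^2))=4(1+6u^2)$, which is a rational square only when $1+6u^2$ is a square; the equation $m^2-6u^2=1$ is a Pell-type conic with infinitely many rational points, so to be safe I would first restrict $u$ to lie outside this thin set, ensuring $abcd\neq\square$ as required. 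Next I would invoke the Pinch–Swinnerton-Dyer tables~\cite{PiSwi} to confirm that for generic $u$ the Picard rank of $\cal{S}_u$ is exactly $1$; this is the computation already asserted in the Example, and for the final statement it suffices to know it holds off a proper Zariski-closed (hence finite, in the parameter line) subset of $u$-values.

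The second ingredient is the supply of rational points. By the construction of Section~\ref{sec3}, $\cal{S}_u$ is fibred by the pencil of elliptic curves $\cal{E}$ with cubic model $Y^2=X(X^2+3(P^2+PQ+Q^2)(P^2+3PQ+3Q^2+4PQt+8Q^2t+6Q^2t^2))$ specialized at $P=Q=1$, $t=-1+u$; the point $M=(-1,-1+u)$ was shown to be a generator of $\cal{E}(\Q(u))$, and its multiples $kM$ pull back to infinitely many points on $\cal{S}_u$. To make this rigorous for a positive-density (indeed cofinite) set of $u_0\in\Q$, I would specialize at a single convenient value — say $u_0=1$, giving the surface $X^4+Y^4-2Z^4-14W^4=0$ — and check by a Lutz–Nagell / height argument that the image of $M$ there is non-torsion on the corresponding elliptic curve over $\Q$; by Silverman's specialization theorem this forces $M$ to be non-torsion on $\cal{E}/\Q(u)$, and then only finitely many $u_0$ can make the specialization torsion. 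Using Mazur's theorem to bound torsion orders by $12$ and computing $kM$ for $1\le k\le 12$, exactly as in the proof of Theorem~\ref{pic}, one pins down the finite exceptional set precisely.

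Combining the two pieces: for all $u_0\in\Q$ outside a finite set (the Pell locus, the finitely many $u_0$ where the Picard rank jumps, and the finitely many $u_0$ where $M$ specializes to a torsion point), the surface $\cal{S}_{u_0}$ is a diagonal quartic over $\Q$ with $abcd\neq\square$, Picard number $1$, and infinitely many rational points; distinct values of $u_0$ give non-isomorphic surfaces (the coefficient ratios are non-constant functions of $u_0$), so the family is infinite. I expect the main obstacle to be the Picard-rank bookkeeping: one must be certain that rank $1$ is the \emph{generic} behaviour in this family and that the jump locus is a proper closed subset — this rests on reading the Pinch–Swinnerton-Dyer classification correctly and on verifying that the two obvious fibrations of the Section~\ref{sec3} construction, unlike the situation in Section~\ref{sec2}, do not both descend to $\Q$ and force rank $\ge 2$. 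Granting that, the rest is the now-familiar specialization-of-a-generator routine.
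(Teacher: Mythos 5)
Your proposal is correct and follows essentially the same route as the paper, which proves the theorem by exactly this example: the family $\cal{S}_u$ at (\ref{St}) obtained from the second construction with $P=Q=1$, $t=-1+u$, the generator $M(-1,-1+u)$ of $\cal{E}(\Q(u))$, and the specialization/Mazur argument already rehearsed in the proof of Theorem~\ref{pic}. One small caveat: the locus where the Picard number (over $\Q$) jumps is cut out by arithmetic conditions such as $1+6u^2=\square$ and is therefore a thin but possibly \emph{infinite} set of $u$, not a finite one as you assert -- this does not harm the conclusion, since its complement is still infinite, but the phrase ``proper closed subset, hence finite'' should be replaced by an argument that infinitely many $u$ avoid all the square/fourth-power conditions in the Pinch--Swinnerton-Dyer tables.
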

\section{Third construction } \label{sec4}
In this section we propose a different idea, which we name the polynomial method. More precisely,
we are interested in finding polynomial curves of low degree which lie on the affine variety
$F(x,y,z,w)=0$, where
\begin{equation*}
F(X,Y,Z,W)=PX^4+QY^4+RZ^2+SW^2.
\end{equation*}
Throughout, we suppose $PQRS \neq 0$, and that $PQRS$ is not a perfect square.
Essentially, we seek polynomial curves, say in the variable $T$, which satisfy the conditions
$\op{deg}X\leq 1,\op{deg}Y\leq 1$ and $\op{deg}Z\leq2, \op{deg}W\leq 2$.
It is clear that if such an identity exists, and the curve of genus one given by the intersection
\begin{equation*}
\alpha U^2 =Z(T),\quad \beta V^2=W(T)
\end{equation*}
has infinitely many rational points, then there are infinitely many rational points on the diagonal
quartic surface
$PX^4+QY^4+R\alpha^2 U^4+S\beta ^2 V^4=0$. \\ \\
If $\op{deg}X=0$, we have one of two identities:
\[ (P,Q,R,S)=(a c(a d-b c), -b d(a d-b c), -c d, a b), \quad (X,Y,Z,W)=(1,t,a+b t^2, c+d t^2), \]
where the product of coefficients is a square; and
\begin{align}
\label{ident0}
(P,Q,R,S)=((2a c-b^2)a^2, (2a c-b^2)c^2, 1, -2a c), & \\
(X,Y,Z,W)=(1,t, a b +2 a c t+b c t^2, a+b t+c t^2). \nonumber
\end{align}
If  $\op{deg}X=1$, then by homogeneity, by multiplying $T$ by a scalar, and by absorption of fourth powers
into $P,Q,R,S$, it suffices only to consider polynomial solutions of $F(X,Y,Z,W)=0$ of the form
\begin{equation*}
X=T, \quad Y=1+T, \quad Z=r+q T+T^2, \quad W=w+v T+T^2.
\end{equation*}
Equating the coefficients of the resulting expression in $T$ to zero, there arises:
\begin{equation*}
\begin{cases}
Q + r^2R + w^2 S=0,\\
2Q + q r R + 2v w S=0, \\
6Q + q^2R + 2r R + (v^2 + 2w)S=0,\\
2Q + q R + v S=0, \\
P + Q + R + S =0.
\end{cases}
\end{equation*}
Provided $r(q-2r) \neq 0$, then the first, second, and fifth equation imply that
\begin{align*}
P= & (q r - 2r^2 - v w + r^2v w + 2w^2 - q r w^2)S/(r(-q + 2r)), \\
Q= & (r v - q w)w S/(q - 2r), \\
R= & (v - 2w)w S/(r(-q + 2r))
\end{align*}
Provided $w(2r^2-q)+r(q-2r) \neq 0$, then the third and fourth equations give
\begin{align*}
v = & 2q(r-1)w^2/(w(2r^2-q)+r(q-2r)), \\
 & (r-w)(w-1)w(q r-2r^2-q w+q^2w-2q r w+2r^2w)=0.
\end{align*}
The instances $(r-w)(w-1)w=0$ lead to $PQ=0$; so necessarily
\[ w(-q+q^2-2q r+2r^2)+r(q-2r) = 0. \]
Now $-q+q^2-2q r+2r^2 \neq 0$, otherwise $r(q-2r)=0$; so that
\[ w=r(2r-q)/(-q+q^2-2q r+2r^2). \]
Together, we obtain the identity $P X^4+Q Y^4+R Z^2+S W^2=0$, where
\begin{align}
\label{ident1}
(P,Q,R,S) = & ( -(q-r-1)^2(q^2-2q r+2r^2-2r), \; -r^2(q^2-2q r+2r^2-2r), \\
            & 2(q-r-1)r, \; 1 ) \nonumber
\end{align}
and $(X,Y,Z,W)=$
\[ \left( T, \; 1+T, \; r+q T+T^2, \; -(q-2r)r+2(r-1)r T+(q^2-2q r-q+2r^2)T^2 \right). \]
The exceptions above can easily be dealt with:
if $r(q-2r) \neq 0$ and $w(q-2r^2)-r(q-2r)=0$, then $w=r(q-2r)/(q-2r^2)$, leading to
$q(r-1)=0$.  Now $r \neq 1$, otherwise $P=0$; so necessarily $q=0$ and $(v-2)(r v-r+1)=0$.
The case $v=2$ leads to $R=0$; and thus $v=(r-1)/r$. This gives the identity
$P X^4+Q Y^4+R Z^2+S W^2=0$,
where
\begin{equation}
\label{ident2}
(P,Q,R,S)=\left( (1-r)(1+r)^2, \; (1-r)r^2, \; -1-r, \; 2r \right)
\end{equation}
and
\[ (X,Y,Z,W) = \left( T, \; 1+T, \;r+T^2, \; r+(-1+r)T+r T^2 \right). \]
Finally, if $r(q-2r)=0$, then the only non-degenerate identity that results is
$P X^4+Q Y^4+R Z^2+S W^2=0$, where
\begin{equation}
\label{ident3}
(P,Q,R,S)=\left( (1-w)^2, \; w^2, \; 2w(1-w), \; -1 \right),
\end{equation}
and
\[ (X,Y,Z,W) = \left( T, \; 1+T, \; T+T^2, \; w+2w T+T^2 \right). \]
In the identities at (\ref{ident0}), (\ref{ident1}), (\ref{ident2}), (\ref{ident3}), it is straightforward
by choice of parameter to force rational points on to the intersection of the two quadrics $Z,W$
in the variable $T$,
thereby providing examples of diagonal quartic surfaces with infinitely many points.
The generic Picard rank in each case is equal to two. But the examples are interesting in their own right,
and we give a few more details.
\begin{exam}{\rm
At (\ref{ident0}), we demand
\[  a b +2a c t+b c t^2 = \alpha u^2, \quad a+b t+c t^2 = \beta v^2. \]
If we set, for example, $(\alpha,\beta)=(a b,a)$, then the intersection
represents an elliptic curve with origin at $(t,u,v)=(0,1,1)$. The point
$Q(t,u,v)=(0,1,-1)$ is of infinite order, and pullbacks of its
multiples $mQ$ provide infinitely many points on the surface of generic rank 2
\[ (2a c-b^2)a^2 X^4+(2 a c-b^2)c^2 Y^4+ a^2 b^2 Z^4 = 2 a^3 c W^4. \]
When $m=2$ for instance,
\begin{align*}
(X,Y,Z,W)= & (b^4 - 20a b^2c + 4a^2c^2, \; 8a b(b^2 + 2a c),  \\
& -b^4 - 20a b^2c + 12a^2c^2, -3b^4 + 20a b^2c + 4a^2c^2).
\end{align*}
}
\end{exam}
\begin{exam}{\rm
On homogenizing (\ref{ident1}), we have the example
\[ (p-q+r)^2(2p r-q^2+2q r-2r^2)X^4 + r^2(2p r-q^2+2q r-2r^2)Y^4 - 2r(p-q+r)s^2 Z^4 + t^2 W^4 = 0, \]
with $X=T$, $Y=1+T$, and where we demand
\[ r+ q T+p T^2 = s Z^2, \qquad r(2r-q) - 2r(p-r)T + (-p q+q^2-2q r+2r^2)T^2 = t W^2. \]
It is only necessary to specialize $p,q,r,s,t$ so that this latter intersection
represents an elliptic curve of positive rank. This occurs for example when
$s=r$, $t=r(2r-q)$, where we take $(T,Z,W)=(0,1,1)$ as zero of the corresponding
elliptic curve. The point $Q(T,Z,W)=(0,1,-1)$ is of infinite order, and pullbacks of its
multiples $mQ$ provide infinitely many points on the surface
\begin{align}
\label{surf1}
(p-q+r)^2(q^2-2 p r-2 q r+2r^2)X^4 & + r^2(q^2-2 p r-2 q r+2r^2)Y^4+ \\
2r^3(p-q+r)Z^4 & - (q-2r)^2r^2W^4 = 0. \nonumber
\end{align}
For example when $m=2$:
\begin{align*}
(X,Y, & Z,W) = (8(q-2r)r(q^2+2p r-6q r+6r^2), \\
& q^4-20p q^2r+12q^3r+4p^2r^2+72p q r^2-72q^2r^2-72p r^3+120q r^3-60r^4, \\
&-3q^4+20p q^2 r+4q^3r+4p^2r^2-88p q r^2+32q^2r^2+88p r^3-72q r^3+36r^4, \\
& q^4+20p q^2r-28q^3r-12p^2r^2-56p q r^2+112q^2r^2+56p r^3-168q r^3+84r^4).
\end{align*}
}
\end{exam}
\begin{exam}{\rm
On homogenizing (\ref{ident2}), we have the example
\[ -(r-s)(r+s)^2 X^4 - r^2 (r-s) Y^4 - (r+s) p^2 Z^4 + 2r q^2 W^4 = 0, \]
with $X=T$, $Y=1+T$, and where we demand
\[ r+s T^2 = p Z^2, \qquad  r + (r-s)T + r T^2 = q W^2. \]
It suffices to specialize $p,q,r,s$ so that this latter intersection
represents an elliptic curve of positive rank. This occurs for example when $p=q=r$,
with $(T,Z,W)=(0,1,1)$ as origin of the elliptic curve. The point $Q(T,Z,W)=(0,1,-1)$
is of infinite order, and pullbacks of its multiples $mQ$ provide infinitely many
points on the surface
\begin{align}
\label{surf2}
-(r-s)(r+s)^2 X^4-r^2(r-s) Y^4-r^2(r+s) Z^4+2r^3 W^4 = 0.
\end{align}
For example when $m=2$:
\[ (X,Y,Z,W)=(-8r(3r+s), \; -15r^2-18r s+s^2, \; 9r^2+22r s+s^2, \; 21r^2+14r s-3s^2 ); \]
etc.
}
\end{exam}
\begin{exam}{\rm
On homogenizing (\ref{ident3}), we have the example
\[ -(v - w)^2 X^4 - w^2 Y^4 - 2(v-w)w t^2 Z^4 + u^2 W^4 = 0, \]
with $X=T$, $Y=1+T$, and where we demand
\[ T+T^2 = t Z^2, \qquad  w + 2 w T + v T^2 = u W^2. \]
It is only necessary to specialize $t,u,v,w$ so that this latter intersection
represents an elliptic curve of positive rank. As illustration, take $v/w=2a$, $t=4(1-a)$, $u=w$,
with $(T,Z,W)=(0,0,1)$ as origin for the elliptic curve. The point $Q(T,Z,W)=(-1/a,1/(2a),1)$
is of infinite order, and pullbacks of its multiples $mQ$ provide infinitely many
points on the surface
\begin{align}
\label{surf3}
(1-2a)^2 X^4 + Y^4 - 2(1-a)^2(1-2a) Z^4 - W^4 = 0.
\end{align}
For example when $m=2$:
\[ (X,Y,Z,W) = (4(1-a)a^2, \; (2-4a+a^2)^2, \; 2a(2-4a+a^2), \; -4+16a-20a^2+8a^3+a^4), \]
etc.
}
\end{exam}
Finally, we observe that it is possible, given $m \in \Z$, to obtain surfaces of this
type with $PQRS \equiv m$ modulo squares. For set $r=1$, $q=2+2m$ in (\ref{ident1}).
There results
\[ 4m^2(1+m) T^4+(1+m) (1+T)^4-m \left( 1-(1+2m)T^2 \right)^2-\left( 1+2(1+m)T+T^2 \right)^2=0, \]
and the product of the coefficients is $m$ modulo squares.
We now demand that
\[ 1-(1+2m)T^2 = \square, \qquad 1+2(1+m)T+T^2 = \square. \]
This intersection contains a point at $T=0$, and the curve is of positive rank.
Some points are given by $T=$
\[ \frac{4(-1+m)}{(1+m)(5+m)}, \quad \frac{4(-1+m) (-3-10m+m^2)(-1+10m+3m^2)}{(1+m)(13+22m+m^2)(1-5m+27m^2+m^3)}, \dots \]
These give the surfaces
\[ 4m^2(1+m)X^4 + (1+m)Y^4 - m Z^4 - W^4 = 0, \]
with infinitely many points given by
\[ (X,Y,Z,W)= \left( 4(-1+m), \; 1+10m+m^2, \; -3-10m+m^2, \; -1+10m+3m^2 \right); \]
\begin{align*}
(X,Y,Z,W) = & (4(-1+m)(3+10m-m^2)(1-10m-3m^2), \\
& (1+10m+m^2)(1+52m+38m^2+52m^3+m^4), \\
& 5+114m-233m^2-1140m^3-381m^4-94m^5+m^6, \\
& 1-94m-381m^2-1140m^3-233m^4+114m^5+5m^6);
\end{align*}
etc. We thus proved:

\begin{thm}
{For any $m\in\Z$ there is a diagonal quartic surface
$ax^4+bY^4+cZ^4+dT^4=0$ containing infinitely many (non-trivial)
rational points and satisfying the property $abcd\equiv m\pmod{\Q^{2}}$.}
\end{thm}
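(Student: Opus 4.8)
The plan is to specialize the identity (\ref{ident1}) so that its coefficient product lands in the desired square class, and then force the two quadrics in $T$ to be simultaneously square. Substituting $r=1$, $q=2(1+m)$ into (\ref{ident1}) and dividing the resulting identity through by $-4m$ (legitimate once $m\neq 0$), a direct expansion confirms that
\[ 4m^2(1+m)T^4 + (1+m)(1+T)^4 - m\bigl(1-(1+2m)T^2\bigr)^2 - \bigl(1+2(1+m)T+T^2\bigr)^2 = 0. \]
Reading off coefficients, if we set $1-(1+2m)T^2 = Z^2$ and $1+2(1+m)T+T^2 = W^2$, then $(X,Y) = (T,1+T)$ gives a rational point on the diagonal quartic
$4m^2(1+m)X^4 + (1+m)Y^4 - mZ^4 - W^4 = 0$,
whose coefficient product is $abcd = 4m^2(1+m)^2m = \bigl(2m(1+m)\bigr)^2 m \equiv m \pmod{\Q^2}$, as required. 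So it remains only to produce infinitely many rational points on this surface.

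For that I would analyze the curve $C_m$ cut out by the two quadrics $1-(1+2m)T^2 = Z^2$, $1+2(1+m)T+T^2 = W^2$ in $(T,Z,W)$-space, which for all but finitely many $m$ is a smooth genus-one curve carrying the rational point $(T,Z,W)=(0,1,1)$, suitable as origin; every rational point of $C_m$ substitutes into the identity above to yield a rational point on the surface, with distinct $T$-coordinates giving genuinely distinct non-trivial surface points after clearing denominators. Thus it suffices to exhibit a point of infinite order on $C_m$ uniformly in $m$. I would use the explicit point with $T$-coordinate $4(m-1)/\bigl((1+m)(5+m)\bigr)$ (together with the second point listed in the text as a sanity check), transport $C_m$ to a Weierstrass model $\mathcal{E}_m$ over $\Q(m)$, and argue exactly as in the proof of Theorem \ref{pic}: specialize $m$ at one convenient integer to verify via Lutz--Nagell that the image point is non-torsion there, conclude by the specialization theorem for elliptic surfaces that the section is non-torsion generically, then invoke Mazur's bound and a finite symbolic computation of the multiples $kP_m$ for $1\le k\le 12$ to pin down the finitely many $m_0$ at which the specialized point could degenerate to torsion. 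For all remaining $m$ the multiples of $P_m$ pull back to infinitely many rational points on the surface.

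The main obstacle is precisely this uniform-in-$m$ positive-rank statement: one must rule out that the explicit section is torsion on the elliptic surface $\mathcal{E}_m \to \mathbb{A}^1$, and, more delicately, identify explicitly the finite set of integers $m_0$ at which its specialization becomes torsion or at which $C_{m_0}$ degenerates (these include the zeros $m\in\{-1,-5,0\}$ of the denominators appearing in the construction). The degenerate value $m=0$ is harmless, since then the surface reduces to $Y^4=W^4$, which trivially has infinitely many rational points with $abcd=0\equiv 0$; the remaining finitely many exceptional $m$ (notably $m=-1$) are handled by choosing a different specialization of the parameters $q,r$ in (\ref{ident1}) still lying in the square class of $m$, or by writing down an ad hoc quartic directly. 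Everything else --- the polynomial identity, the square-class computation, and the production of the sample points for small multiples --- is routine symbolic algebra.
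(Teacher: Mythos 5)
Your proposal follows essentially the same route as the paper: specialize (\ref{ident1}) at $r=1$, $q=2+2m$ to get the identity $4m^2(1+m)T^4+(1+m)(1+T)^4-m\bigl(1-(1+2m)T^2\bigr)^2-\bigl(1+2(1+m)T+T^2\bigr)^2=0$ with coefficient product $\bigl(2m(1+m)\bigr)^2m$, then force the two quadrics to be squares and use the positive rank of the resulting genus-one intersection (with the same sample point $T=4(m-1)/((1+m)(5+m))$). Your treatment is in fact slightly more careful than the paper's, which simply asserts positive rank and does not discuss the degenerate values $m\in\{-1,0\}$.
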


\section{Identities for sixth powers} \label{sec5}
In this section we apply the polynomial method to find low degree curves on the two types of surface
that are of interest to us:
\begin{equation}\label{quadcube}
PX^6+QY^6+RZ^3+SW^2=0
\end{equation}
and
\begin{equation}\label{quadquad}
PX^6+QY^6+RZ^3+SW^3=0.
\end{equation}

First, consider the equation (\ref{quadcube}). We seek polynomial solutions of (\ref{quadcube}) of the form
\begin{equation*}
X=T,\quad Y=1+T,\quad Z=T^2+aT+b,\quad W=T^3+cT^2+dT+e.
\end{equation*}
Let $F(X,Y,Z,W)$ denote the left hand side of (\ref{quadquad}). Then after the above substitution
we obtain a degree six polynomial in $T$, say $F(X,Y,Z,W)=\sum_{i=0}^{6}c_{i}T^i$.
We are interested in solving in rationals the system
of equations $c_{i}=0$ for $i=0,\ldots,6$, with the assumption $PQRS\neq 0$. First, solve the
system
\begin{equation*}
c_{4}=c_{5}=c_{6}=c_{7}=0
\end{equation*}
with respect to $P, Q, d, e$. There results:
\begin{align*}
P&=\frac{1}{6}(3(a-2) R + 2(c-3)S),\\
Q&=-\frac{3aR+2cS}{6},\\
d&=\frac{3 (2a^2-5a + 2 b)R+2(c-5)cS}{4S},\\
e&=\frac{-3(2a^3-20a + 12ab+15ac-6a^2c-6bc)R+2c(3c^2-15c+20)}{12S}.
\end{align*}
Substituting into the first three equations and clearing the common denominator $144S$, we are left
with the system
\begin{equation*}
c_{1}(a,b,c,R,S)=c_{2}(a,b,c,R,S)=c_{3}(a,b,c,R,S)=0.
\end{equation*}
These equations are of degree $3, 2, 2$ respectively in the variable $b$. The polynomial
$M_{1}=\op{Res}_{b}(c_{1},c_{2})=20736 R^2 S \times W_{1}(a,c,R,S)$, with $\op{deg}_{R}W_{1}=7$;
and $M_{2}=\op{Res}_{b}(c_{2},c_{3})=5184 R^2 S\times W_{2}(a,c,R,S)$, with $\op{deg}_{R}W_{2}=5$.
Finally, $W(a,c,S)=\op{Res}_{R}(W_{1},W_{2})$ has the following factorization:
\begin{align*}
W(a,c,S)=-2^{41}&3^{36}5^{2}S^{35}(a-2)^{11}(a-1)^{3}a^{13}(3a-2c)^{4}(c-3)^{7}c^{8} \times \\
& (4-4c+a(c+1))(2c+a(c-4))f_{10}(a,c)f_{11}(a,c)f_{12}(a,c)f_{13}(a,c),
\end{align*}
where
\begin{align*}
&f_{10}(a,c)=(a^3-7a^2+23a+1)c^2+(a+1)(a^2-16a-27)c+2a(2a^2+14a+27),\\
&f_{11}(a,c)=(a^3+a^2+7a-27)c^2-(7 a^3+15a^2-49a+3)c+4a (2a-1)^2,\\
&f_{12}(a,c)=\sum_{i=0}^{6}p_{i}(a)c^{i},\\
\end{align*}
with
\begin{align*}
&p_{0}=8 a^3 (1+18 a+2058 a^2+1584 a^3+444 a^4+72 a^5+8 a^6),\\
&p_{1}=-12 a^2 (1+49 a+1601 a^2+1609 a^3+1592 a^4+616 a^5+100 a^6+12 a^7),\\
&p_{2}=6 a (3+134 a+5587 a^2-20016 a^3+28417 a^4-9500 a^5+2771 a^6-54 a^7+98 a^8),\\
&p_{3}=-9-927 a-10296 a^2+49352 a^3-18054 a^4-38322 a^5+8348 a^6+1596 a^7-1221 a^8-387 a^9,\\
&p_{4}=3 (-1+a) (-135-1350 a-1570 a^2+9688 a^3-2738 a^4+1838 a^5-1170 a^6+368 a^7+29 a^8),\\
&p_{5}=-3 (-1+a)^2 (2025-1251 a+1825 a^2+169 a^3-329 a^4-21 a^5+59 a^6+3 a^7),\\
&p_{6}=(-1+a)^3 (729-486 a+783 a^2-548 a^3+147 a^4-6 a^5+a^6).
\end{align*}

The last factor $f_{13}$ is of degree $57$ in $a$ and $59$ in $c$.  It is clear that the only potentially
awkward factors for a case-by-case analysis are $f_{i}$ for $i=10,11,12,13$. The curve defined by
$f_{10}(a,c)=0$ in the $ac$ plane is rational, with parametrization:
\begin{equation*}
a=-\frac{6(t-5)}{t^2+15},\quad c=-\frac{12 \left(t^2-2 t+21\right)}{t^3-3 t^2+3 t-81}.
\end{equation*}
Similarly, the curve defined by $f_{11}(a,c)=0$ is rational with parameterization:
\begin{equation*}
a=-\frac{6 (t-5)}{t^2+15},\quad c=-\frac{8 (t-3)^2}{t^3+t^2+27 t+3}.
\end{equation*}
Surprisingly, the curve defined by $f_{12}(a,c)=0$ is also of genus 0, with nontrivial rational points
when $a c(3a-2c)=0$. However, these are the only rational points. For
\[ f_{12}(a,c) = \mbox{Norm}_{\Q(\theta)/\Q} (g_0(a,c)+g_1(a,c)\theta+g_2(a,c)\theta^2), \]
where $\theta^3=2$, and
\begin{align*}
g_0(a,c)= & 562a + 60a^2 + 20a^3 - 373c + 19a c - 71a^2c - 3a^3c - 45c^2 + 47a c^2 - 3a^2c^2 + a^3c^2, \\
g_1(a,c)= & 2(223a + 24a^2 + 8a^3 - 148c + 10a c - 32a^2c - 18c^2 + 18a c^2), \\
g_2(a,c) = & 354a + 36a^2 + 12a^3 - 235c + 15a c - 48a^2c - 27c^2 + 27a c^2.
\end{align*}
Thus $f_{12}(a,c)=0$ implies $g_0=g_1=g_2=0$, and it follows easily that either $(a,c)$ is the point
at infinity $(0/0,1/0)$, or $(a,c)=(0,0),(1,3/2),(2,3)$. No new identities result. \\
We are unable to say much about the zeros of $f_{13}(a,c)=0$, a curve of degree 70. By reducing
modulo a large number of primes, we believe the curve to be absolutely irreducible. A small search
produces zeros at $(a,c)=(0,0)$, $(1,0)$, $(2,0)$, $(1/2,0)$, $(2/9,0)$, $(0,2)$, $(0,3)$, $(0,-1/23)$,
$(1,2)$, $(2,3)$, $(2,4)$, $(1,3/2)$, $(1,3/2)$, $(-2,1/3)$, $(3/2,3)$, $(8,3)$, $(1,7/3)$, but no
new identities result. \\
Case-by-case analysis now results in the following table. Solutions occur in pairs
corresponding to the transformation $T \rightarrow -T-1$, and without loss of generality
we list only one of each pair.
\begin{equation*}
\begin{array}{|c|c|c|c|}
\hline
P & Q & R & S \\
\hline
x & y & z & w \\
\hline \hline
t^2 & 1 & -2t & -1 \\
\hline
T & 1+T & T+T^2 & (1+T)^3 - t T^3 \\
\hline
& & & \\
\hline
16t^3 & 8 & -1 & 3t \\
\hline
T & 1+T & 2((1+T)^2+2t T^2) & 4T((1+T)^2+t T^2) \\
\hline
& & & \\
\hline
1 & -108 & -4 & 3 \\
\hline
1 & T & 1-3T+6T^2 & -1+6T-12T^2+18T^3 \\
\hline
& & & \\
\hline
1 & -432 & 8 & -9 \\
\hline
1 & T & 1-3T+6T^2 & -1+4T-12T^2+12T^3 \\
\hline \hline
\end{array}
\end{equation*}

\noindent
We obtain points on $P x^6+Q y^6+R z^6+S w^2=0$ by demanding $Z=z^2$. As an example, consider the
first entry in the above table, with demand $T+T^2=z^2$. The latter is parameterized by
\[ (T,z)=\left( a^2/(-a^2+b^2), \; a b/(-a^2+b^2) \right) \]
resulting in the (rather dull) identity $t^2 x^6 + y^6 -2t z^6 = w^2$ with
\[ (x,y,z,w)=(a^2, \; b^2, \; a b, \; a^6t-b^6). \]

\noindent
As second example, take the second entry in the table, where we demand
\[ (1+T)^2+2t T^2 = z^2. \]
This is parameterized by
\[ (T,z)=\left( \frac{2a b}{2t a^2-2a b-b^2}, \; \frac{2t a^2+b^2}{2t a^2-2 a b-b^2} \right) \]
resulting in the identity $2t^3 x^6+y^6-z^6+6t w^2=0$ with
\[ (x,y,z,w)=(2a b, 2t a^2-b^2, 2t a^2+b^2, 2a b(4t^2 a^4+b^4) ). \]

\noindent
The third entry (with $PQRS=\square$) demands
\[ 1-3T+6T^2 = z^2, \]
which is parameterized by
\[ (T,z)=\left( \frac{a(3a+2b)}{6a^2-b^2}, \; \frac{6a^2+3a b+b^2}{6a^2-b^2} \right), \]
resulting in the identity $x^6-108y^6-4z^6+3w^2=0$
where
\begin{align}\label{specsol}
(x,y,z,w)= & (6a^2-b^2, \; a(3a+2b), \; 6a^2+3a b+b^2, \\
& 270a^6+540a^5b+360a^4b^2+144a^3b^3+48a^2b^4+12a b^5+b^6).\notag
\end{align}
Similarly, the fourth entry leads to the identity $x^6-432y^6+8z^6=9w^2$
with
\begin{align*}
(x,y,z,w)= & (6a^2-b^2, \; a(3a+2b), \; 6a^2+3a b+b^2, \\
& 108a^6-72a^5b-216a^4b^2-144a^3b^3-42a^2b^4-8a b^5-b^6).
\end{align*}

\noindent
In the same way, we can obtain points on $P x^6+Q y^6+S m^2 z^6+R w^3=0$ by setting $(Z,W)=(w,m z^3)$,
with $m$ chosen so that the appropriate cubic curve is elliptic with positive rational rank.
As example, we consider just the first entry from the table, which leads to the cubic curve
\[ (1+T)^3 -t \; T^3 = m z^3. \]
This latter is elliptic with positive rational rank for $(t,m)=(1,6)$.
A generator is $Q(z,T)=(7/18, -37/54)$, and the resulting identity is
$x^6+y^6-36 z^6 + 2 w^3 =0$. Multiples $kQ$ correspond to solutions:
\begin{equation*}
\begin{array}{c|cccc}
k & x & y & z & w \\ \hline
1 & 37 & 17 & 21 & 629 \\ \hline
2 & 1805723 & 2237723 & 960540 & 4040707888729 \\ \hline
3 & 209143555850753 & 84691068680987 & 112490043311709 & 17712591252741962842340733211 \\ \hline
.. & .. & .. & .. & .. \\ \hline
\end{array}
\end{equation*}

\noindent
Second, we consider the equation (\ref{quadquad}) and look for polynomial solutions of the form
\begin{equation*}
X=T, \quad Y=1+T, \quad Z=b+a T+T^2, \quad W=d+c T+T^2.
\end{equation*}
If $F(X,Y,Z,W)$ denotes the left hand side in (\ref{quadquad}) then the above substitution
results in a degree six polynomial in $T$, say $F(X,Y,Z,W)=\sum_{i=0}^{6}c_{i}T^i$.
Consider the system of equations $c_{i}=0$ for $i=0,\ldots,6$.
Eliminating $a,b,c,d,S$, there results $P Q (P+R) (Q+R) H(P,Q,R)=0$, where
\begin{align*}
H(P,Q,R)=64&R^6+192(P+Q)R^5+48(5 P^2+53 P Q+5 Q^2)R^4+\\
           &32(P+Q)(5 P^2+139 P Q+5 Q^2)R^3+\\
           &12(5P^4+8P^3Q+1414P^2Q^2+8P Q^3+5 Q^4)R^2+\\
           &12(P+Q)(P^2-174 P Q+Q^2)(P^2-6 P Q+Q^2)R+\\
           &(P^2+18 P Q+Q^2)^3.
\end{align*}
Solutions with $P Q=0$ are not of interest, and rational solutions resulting from $(P+R)(Q+R)=0$
are trivial.  Consider finally $H(P,Q,R)=0$. Surprisingly, the equation defines a curve $C$ of genus
zero, with parametrization:
\begin{equation*}
P=2 v^6, \quad Q=-2,\quad R=(1-v-v^2)^3.
\end{equation*}
Consequently, we obtain just one rational solution of the system. After the substitution $T \rightarrow 1/(v T-1)$, there results the identity
\begin{equation*}
(1-T-T^2)^3+(1+T-T^2)^3=2-2T^6,
\end{equation*}
discovered by Elkies and mentioned in \cite{Elk2}.

\begin{rem}
{\rm Note that for the equation $x^6-108y^6-4z^6+3w^2=0$ we can prove much more. Indeed,
rewrite the equation in the form $(x^3-2z^3)(x^3+2z^3)=3(6y^3-w)(6y^3+w)$ and make the substitution
$w=t(x^3+2z^3)-6y^3$, leading to the equation of a genus 1 curve over $\Q(t)$:
\begin{equation}\label{speccurve}
\cal{C}:\;\left(3 t^2+1\right) x^3+\left(6 t^2-2\right) z^3-36 t y^3=0.
\end{equation}
Using the solution given by (\ref{specsol}) we can define a base change
$t=\phi(a)=(1+12 a^2+12 a^3)/(1+6 a+18 a^3)$ which give us a $\Q(a)$- rational point
on $\cal{C}_{\phi}$. This point can be used to produce infinitely many rational parametric solutions
of the equation defining $\cal{C}_{\phi}$, and thus leads to infinitely many rational curves on the
surface, say $\cal{S}$, defined by the equation $x^6-108y^6-4z^6+3w^2=0$ with $w=t(x^3+2z^3)-6y^3$.
As an immediate consequence of the existence of infinitely many rational curves on $\cal{S}$, we
get that the set $\cal{S}(\Q)$ is Zariski dense in $\cal{S}$.}
\end{rem}

Finally, we adapt the third method above to construct some surfaces of the form
\begin{equation*}
Px^6+Qy^6+Rz^6+Sw^2=0
\end{equation*}
with infinitely many rational points. Consider the genus ten curve $C:\;Px^6+Qy^6+Rz^6=0$ and suppose
that $(a,b,c)$, with $abc\neq 0$ is a rational point lying on $C$. Put
\begin{equation*}
x=aT,\quad y=bT-\frac{c^3R}{b^3Q},\quad z=cT+1,
\end{equation*}
 Then, with $x, y, z$ defined above we get
\begin{equation*}
G(T)=\frac{R(b^6Q+c^6R)}{b^{30}Q^5}\sum_{i=0}^{3}C_{i}T^{i},
\end{equation*}
where
\begin{align*}
&C_{0}=b^{24} Q^4 - b^{18} c^6 Q^3 R + b^{12} c^{12} Q^2 R^2 - b^6 c^{18} Q R^3+c^{24}R^4,\\
&C_{1}=-6 b^6 c Q (-b^6 Q + c^6 R) (b^{12} Q^2 + c^{12} R^2),\\
&C_{2}=15 b^{12} c^2 Q^2 (b^{12} Q^2 - b^6 c^6 Q R + c^{12} R^2),\\
&C_{3}=-20 b^{18}c^3 Q^3 (-b^6 Q + c^6 R),\\
&C_{4}=15 b^{24}c^4Q^4.
\end{align*}
Now $G$ is of degree four in $T$ and thus if a rational number $S$ is chosen so that the quartic curve
\begin{equation*}
\cal{C}:\;-S w^2=G(T)
\end{equation*}
has infinitely many rational points, then there will automatically be infinitely many rational points
on the surface $Px^6+Qy^6+Rz^6+Sw^2=0$. As before, we can guarantee positive rank by taking $S=-G(t)$,
for a parameter $t$.\\
As an example, take $(a,b,c)=(1,1,1)$, $(P,Q,R)=(p,q,-p-q)$. It is straightforward to determine that
in consequence there are infinitely many points on either of the two surfaces
\begin{align*}
p x^6+q y^6 -(p+q) z^6 -15 p q(p+q)w^2 & = 0, \\
p x^6+q y^6 -(p+q) z^6 -p q(p^5+q^5) w^2 & = 0.
\end{align*}
If we specialize further to $(P,Q,R)=(1,1,-2)$, then we get the quartic curve
$-S w^2=2(31+90T+105T^2+60T^3+15T^4)$. Taking now $S=-2(31+90t+105t^2+60t^3+15t^4)$ we consider the curve
\begin{equation*}
\cal{C}: \; (31+90t+105t^2+60t^3+15t^4)w^2=31+90T+105T^2+60T^3+15T^4.
\end{equation*}
Taking the point $(T,w)=(t,1)$ as a point at infinity, then the point $Q(T,w)=(t,-1)$
is of infinite order, and its multiples pull back to infinitely many points on the surface.
For example, $2Q$ delivers the point, on simplifying by replacing $t$ by $-1-t$:
\begin{align*}
(x,y,z,w) = (& -1 + 3t + 60t^3 + 90t^4 + 90t^5 + 900t^6 + 675t^8 - 225t^9, \\
             & -1 - 3t - 60t^3 + 90t^4 - 90t^5 + 900t^6 + 675t^8 + 225t^9, \\
             & 3t(1 + 20t^2 + 30t^4 - 75t^8), \\
&  (-1 + 90t^4 + 900t^6 + 675t^8)(1 + 60t^2 + 420t^4 + 6300t^6 + 55350t^8 + \\
& 94500t^{10} + 94500t^{12} + 202500t^{14} + 50625t^{16}))
\end{align*}
on the surface
\[ x^6+y^6-2 z^6 = 2(1+15 t^2+15 t^4)w^2, \]
etc.

\section{Final remarks} \label{sec6}

We can try to extend the polynomial method of the last two sections in the following manner.
Seek $P,Q,R,S \in \Q$ and polynomials $G_{i}, F\in\Q[t]$, with
$\op{deg}G_{i}\leq 2$ for $i=1,2,3$ and $\op{deg}F\leq 4$, such that either
\begin{equation}
\label{quartsurf}
PG_{1}(t)^4+QG_{2}(t)^4+RG_{3}(t)^4+SF(t)^2=0,
\end{equation}
or
\begin{equation}
\label{sextsurf}
PG_{1}(t)^6+QG_{2}(t)^6+RG_{3}(t)^6+SF(t)^3=0.
\end{equation}
If we can find such polynomials and numbers $P,Q,R,S$, the existence
of a rational point, say $(t,w)$, on the quartic curve
\begin{equation*}
C:\;w^2=F(t),
\end{equation*}
then implies respectively the existence either of a rational point on the surface $Px^4+Qy^4+Rz^4+Sw^4=0$,
or a rational point on the surface $Px^6+Qy^6+Rz^6+Sw^6=0$, in each case with
$(x,y,z,w)=(G_{1}(t),G_{2}(t),G_{3}(t),w)$. \\ \\
The system of equations underlying (\ref{quartsurf}) is too large for complete analysis.
However, there seem to be numerous such identities, and we list just one of the simpler ones:
\[(P,Q,R,S) = (a^2+2a b+2b^2-2c, 4(a^2+2a b+2b^2 - 2c), (a^2-2a b+2b^2)c^2, -a^2+2a b-2b^2), \]
\begin{align*}
(G_1,G_2,G_3)= & (2a(2a b-c) - 4b(2a b-c)t + (4b^3+(a-2b)c)t^2, \\
& 2(a^3-(a-b)c) - 2a(2a b-c)t + b(2a b-c)t^2,  \\
& 2(2a b-c) - 2(a^2+2b^2-c)t + (2a b-c)t^2 ),
\end{align*}
with
\begin{align*}
F(t) & =4\left( 2a^4(a^2+2ab+2b^2) -2a^3(3a+2b)c + (5a^2-2ab+2b^2)c^2 - c^3 \right) - \\
& 8(2ab - c) \left( 2a^2(a^2+2ab+2b^2) - 2a(2a+b)c + c^2 \right)t + \\
& 8(2ab - c) \left( 3ab(a^2+2ab+2b^2) - (a^2+6ab+2b^2)c + c^2 \right)t^2 - \\
& 4(2ab - c) \left( 4b^2(a^2+2ab+2b^2) -2b(a+4b)c + c^2 \right)t^3 + \\
& \left( 8b^4(a^2+2ab+2b^2) -8b^3(a+3b)c + (a^2-2ab+10b^2)c^2 - c^3 \right)t^4.
\end{align*}
The generic Picard rank is 2.\\ \\
In the case of (\ref{sextsurf}), we offer the identity

\[\begin{array}{l}
2\left( (p+q)t^2+4qt+p-q \right)^6-2\left( 2qt^2+(2p-2q)t-p+3q \right)^6\\[0.1in]
\;\;-(p^2-5q^2)^3(t^2-t-1)^6-\left( (p^2+4pq-q^2)t^4+(2p^2+22q^2)t^3 \right. \\[0.1in]
\;\;-(3p^2-24pq+9q^2)t^2+
\left. (6p^2-16pq+18q^2)t+8pq-11q^2-p^2 \right)^3=0,\end{array} \]
which, on taking $p=1,\,q=1,$ yields the identity
\[ 2(t^2+2t)^6 - 2(t^2+1)^6 + (t^2-t-1)^6 = (t^4+6 t^3+3t^2+2t-1)^3. \]
For generic $t_0$, the elliptic quartic
\[ t^4+6t^3+3t^2+2t-1 = (t_0^4+6t_0^3+3t_0^2+2t_0-1)W^2 \]
with $(t,W)=(t_0,1)$ as origin, has point $Q(t,W)=(t_0,-1)$ of infinite order.
Pullbacks of $mQ$, $m=2,3,...$, now give infinitely many solutions of
\[ 2X^6-2Y^6+Z^6 = (t_0^4+6t_0^3+3t_0^2+2t_0-1)^3W^6. \]
For example, $2Q$ delivers $(X,Y,Z,W)=$
\begin{align*}
(t(2+t)(-1+2t+2t^2+ 14t^3+20t^4+32t^5+26t^6-4t^7+2t^8) \times & \\
(3+6t-6t^2-6t^3+24t^5+18t^6+12t^7+2t^8), & \\
(1+t^2)(1+8t^2-68t^4-192t^5+296t^6+1488t^7+2560t^8+2112t^9+656t^{10} & \\
-144t^{11}+292t^{12}+480t^{13}+248t^{14}+48t^{15}+4t^{16}), & \\
(1+t-t^2)(1-4t-10t^2-60t^3+50t^4+112t^5+412t^6+600t^7+1540t^8+2640t^9 & \\
+3640t^{10}+3440t^{11}+2240t^{12}+960t^{13}+320t^{14}+64t^{15}+4t^{16}), & \\
1+4t-6t^2+28t^3+126t^4+336t^5+420t^6+104t^7-732t^8-1104t^9-1176t^{10} & \\
-1008t^{11}-784t^{12}-448t^{13}-96t^{14}+4t^{16}). &
\end{align*}
Accordingly, observe that we deduce
infinitely many integer solutions of the Diophantine equation
\[ U_1^6+2 V_1^6+2 W_1^6 = U_2^6+2 V_2^6+2 W_2^6; \]
and more generally, integer solutions of the arbitrarily long chain
\[ 2 X_1^6-2 Y_1^6+Z_1^6 \; = \; 2 X_2^6-2 Y_2^6+Z_2^6 \; = \; 2 X_3^6-2 Y_3^6+Z_3^6 \; = \; \dots \]
No further solution to (\ref{sextsurf}) over $\Q$ was discovered. However,
the following curious identity came to light:
\begin{equation*}
125u^6 - 2(u^2+1)^6 + 2(\epsilon u^2-\epsilon^{-1})^6 = (2\epsilon u^4-3u^2-2\epsilon^{-1})^3,
\end{equation*}
where $\epsilon = (1+\sqrt{5})/2$. \\ \\

\bigskip

\noindent School of Mathematics and Statistics, Arizona State University, Tempe AZ 85287-1804,
USA; email:\;{\tt bremner@asu.edu}

\bigskip

\noindent Foreign Service Institute, Old J.N.U. Campus, Baba Gang Nath Marg, New Delhi -– 110 067, India; email:\;{\tt ajaic203@yahoo.com}

\bigskip

\noindent Maciej Ulas, Jagiellonian University, Institute of Mathematics,
{\L}ojasiewicza 6, 30-348 Krak\'ow, Poland; email:\;{\tt maciej.ulas@uj.edu.pl}

\end{document}